\documentclass[12pt]{article}
\usepackage{fullpage}
\usepackage{epsf,epsfig,amsfonts,amsgen,amsmath,amstext,amsbsy,amsopn,amsthm,amssymb, epstopdf,tikz,mathrsfs}
\usepackage{ebezier,eepic,mathtools,dsfont}
\usepackage{color,colordvi}
\usepackage{multirow}
\usepackage{url}
\usepackage{graphicx}
\usepackage{epsf}
\usepackage[format=hang, margin=10pt]{caption}
\newtheorem{theorem}{Theorem}[section]

\newtheorem{conjecture}[theorem]{Conjecture}
\newtheorem{lemma}[theorem]{Lemma}

\theoremstyle{definition}

\newtheorem{remark}[theorem]{Remark}

\begin{document}

\title{On derivatives and higher-order derivatives of chromatic
polynomials}

\author{Bo Ning\thanks{College of Computer Science, Nankai University, Tianjin 300350, P.R. China. Email: bo.ning@nankai.edu.cn.
Research supported in part by NSFC (No. 12371350) and the Fundamental Research Funds for the Central Universities,
Nankai University.}\,
\qquad
Yan Yang\thanks{School of Mathematics and KL-AAGDM, Tianjin University, Tianjin 300354, P.R. China. Email: yanyang@tju.edu.cn. Corresponding author.}\,
}

\date{}

\maketitle

\begin{abstract}
Let \( G \) be a graph of order \( n \) with maximum degree $\Delta$, and let $P(G,x)$ denote its chromatic polynomial.
We investigate several properties of $P(G,x)$ related to its derivatives and higher-order derivatives.
First, we study the monotonicity of $P(G,x)/x^n$. Dong proved that $(x-1)^nP(G,x)\geq x^nP(G,x-1)$
for all real $x\geq n$. In particular, taking $x=n$ establishes the
Bartels-Welsh ``shameful conjecture" that $P(G,n)/P(G,n-1)>e$.
Fadnavis later showed that the same inequality holds for all real $x\geq 36\Delta^{3/2}$.
We improve this bound by proving that it also holds for all real $x\geq 10\Delta^{3/2}$.
We then consider a conjecture of Dong, Ge, Gong, Ning, Ouyang, and Tay asserting that
\(
\frac{d^k}{dx^k} \bigl( \ln[(-1)^n P(G, x)] \bigr) < 0
\)
for all \( k \geq 2 \) and \( x \in (-\infty, 0) \). We
establish this conjecture for all \( k \geq 2 \) and \( x\leq -3.01\Delta k \).

\medskip
\noindent {\bf Keywords:} chromatic polynomial; monotonicity; derivative; higher-order derivative; root.

\smallskip
\noindent {\bf Mathematics Subject Classification (2020):} 05C31.
\end{abstract}

\section{Introduction}
All graphs considered in this paper are simple. For a graph $G=(V, E)$ and a positive integer $x$,  a {\it proper $x$-coloring} of $G$ is a mapping $c: V\rightarrow \{1,\ldots,x\}$ such that
$c(u)\neq c(v)$ whenever $uv\in E$. The {\it chromatic number} $\chi(G)$ is the least $x$ for which $G$ admits a proper $x$-coloring.
Introduced by Birkhoff \cite{B1912} in 1912,
the {\it chromatic polynomial} $P(G,x)$ counts the number of distinct proper $x$-colorings of $G$, i.e.,
$$P(G,x)=|\{c: V\rightarrow \{1,\ldots,x\} \mid c\text{ is proper}\}|.$$
It was once hoped that chromatic polynomials might help resolve the four-color conjecture
because they are closely connected to chromatic numbers: $\chi(G)$ is the smallest positive integer such that $P(G, \chi(G))\neq 0$.
Although this approach to the four-color problem has not been successful so far, chromatic polynomials have attracted widespread attention and remain an active
area of research. For a book devoted to the topic, see Dong, Koh and Teo~\cite{DKT05}.

By the principle of inclusion and exclusion, we have
\begin{equation*}
P(G,x) = \sum_{E' \subseteq E} (-1)^{|E'|}x^{\kappa(E')},
\end{equation*}
where \( \kappa(E') \) denotes the number of connected components in \( E' \).
In 1932, Whitney \cite{W1932} interpreted the coefficients of \( P(G, x) \)
by introducing the notion of broken cycles. Let \( \eta: E \to \{1, 2, \dots, |E|\} \) be a bijection.
For any cycle \( C \) in \( G \), the path \( C - e \) is called a {\it broken cycle} of \( G \)
if \( e \) is the edge on \( C \) with \( \eta(e) \leq \eta(e') \) for every edge \( e' \) on \( C \).
If \( G \) is a graph of order \( n \), then
\[
P(G, x) = \sum_{i=1}^{n} (-1)^{n-i} a_i(G) x^i,
\]
where \( a_i(G) \) is the number of spanning subgraphs of \( G \) with \( n - i \) edges which do not contain broken cycles.

So it is clear that $P(G, x)$ is a polynomial in $x$ of degree $|V|$.
Let $G$ be a graph of order $n$. By the fundamental theorem of algebra, the equation $P(G,x)=0$
has exactly $n$ roots in the complex plane, counted with multiplicity. So we can write
$$P(G,x)=\prod\limits_{i=1}^{n}(x-\alpha_i)$$ where $\alpha_1,\ldots, \alpha_n$ are roots of $P(G,x)=0$.

Let $\Delta(G)$ (or simply $\Delta$) be the maximum degree of a graph $G$ and
$$\rho(G)=\max\{|\alpha|: P(G,\alpha)=0\}$$ be the maximum modulus of chromatic roots of $G$.
Sokal \cite{S01} proved the following results concerning a bound on $\rho(G)$.

\begin{theorem}[\cite{S01}]\label{Thm:1}
 There exists a constant $K$ such that for every graph $G$, $P(G,x) \neq 0$ for all $x \in \mathbb{C}$ with $|x| > K \Delta$.
\end{theorem}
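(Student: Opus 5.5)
We would prove Theorem~\ref{Thm:1} by the polymer-gas (cluster expansion) method, applied to the subgraph expansion displayed above. Group the sum $\sum_{E'\subseteq E}(-1)^{|E'|}x^{\kappa(E')}$ according to the partition of $V$ into the vertex sets of the components of $(V,E')$. This gives
\[
P(G,x)=x^{n}\sum_{\{S_1,\dots,S_m\}}\prod_{j=1}^{m} w(S_j), \qquad w(S)=x^{1-|S|}\sum_{\substack{F\subseteq E(G[S])\\ (S,F)\ \text{connected}}}(-1)^{|F|}.
\]
Here the sum runs over families of pairwise disjoint subsets $S_j\subseteq V$ with $|S_j|\ge 2$ such that $G[S_j]$ is connected. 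Singletons carry weight $1$ and are absorbed. The expression is therefore the partition function of a hard-core gas of \emph{polymers}, namely connected vertex sets of size at least $2$. Two polymers are incompatible when they intersect.

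To show that this partition function has no zeros, we would invoke the Koteck\'y--Preiss criterion. Suppose there is a constant $a>0$ such that, for every vertex $v$,
\[
\sum_{S\ni v}|w(S)|\,e^{a|S|}\le a .
\]
Then $\log$ of the partition function is given by a convergent cluster expansion, so the partition function is nonzero, and hence $P(G,x)\neq0$.

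The work is in bounding $|w(S)|$. The quantity $\sum_{F\ \text{connected}}(-1)^{|F|}$ is, up to sign, an evaluation of the Tutte polynomial. We would use the Penrose tree-graph identity, which gives
\[
\Bigl|\sum_{F}(-1)^{|F|}\Bigr|\le t(G[S]),
\]
the number of spanning trees of $G[S]$. This yields $|w(S)|\le |x|^{1-|S|}\,t(G[S])$.

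Next we sum over $S\ni v$ with $|S|=s$. Each pair consisting of such an $S$ and a spanning tree of $G[S]$ is exactly a subtree of $G$ on $s$ vertices containing $v$. The number of these subtrees is at most $(e\Delta)^{s-1}$, by the standard count of rooted subtrees in a graph of maximum degree $\Delta$ via the Catalan-type bound. It follows that
\[
\sum_{S\ni v}|w(S)|e^{a|S|}\le \sum_{s\ge2} e^{as}\Bigl(\frac{e\Delta}{|x|}\Bigr)^{s-1}.
\]
Take $a=1$ and $|x|>K\Delta$ with $K$ a large absolute constant, say $K\approx 2e^{2}+1$ or anything comfortably large. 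The series is then geometric with ratio $e^{2}\Delta/|x|$, which is small, and its sum is at most $a$. This verifies the criterion, so $P(G,x)\neq 0$.

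We expect the main obstacle to be the Penrose step, that is, justifying that the alternating sum over connected spanning subgraphs is bounded by the number of spanning trees. We would prove it directly by a partition scheme. Fix an order on the vertices and use BFS trees from a root. Assign each connected $F$ to its BFS tree $T(F)$. The set of $F$ assigned to a given tree $T$ is a Boolean interval $[T, T\cup R(T)]$, where $R(T)$ is the set of edges compatible with $T$. Its signed sum is therefore $(-1)^{|T|}$ if $R(T)=\emptyset$ and $0$ otherwise. The absolute value of the total is then at most the number of trees.

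The remaining steps are routine: the tree-counting bound and the Koteck\'y--Preiss arithmetic.
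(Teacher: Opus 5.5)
The paper does not prove this theorem; it quotes it from Sokal \cite{S01}. Your argument is essentially Sokal's own route: the polymer representation of $P(G,x)$ over connected vertex sets, the Penrose tree-graph bound $\bigl|\sum_{F}(-1)^{|F|}\bigr| = T_{G[S]}(1,0)\le t(G[S])$, a Koteck\'y--Preiss type criterion with $a(S)=a|S|$ reduced to a per-vertex condition, and subtree counting by embedding into the infinite $\Delta$-ary tree. The structure is correct.

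Your Penrose partition needs two details made explicit. First, specify $R(T)$: it is the set of edges of $G[S]$ not in $T$ that either join two vertices of equal depth, or join a vertex $w$ of depth $d$ to a vertex $u$ of depth $d+1$ with $w$ later in the fixed order than the parent of $u$. Second, note that every spanning tree is its own BFS tree. This ensures the fibres $[T,T\cup R(T)]$ are indexed by all spanning trees.

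The only real error is the arithmetic at the end. With $a=1$ your sum equals $e\,q/(1-q)$, where $q=e^{2}\Delta/|x|$. This is at most $1$ only when $q\le 1/(1+e)$, that is, when $|x|\ge e^{2}(1+e)\Delta\approx 27.5\Delta$. So $K\approx 2e^{2}+1\approx 15.8$ does not satisfy the criterion; the sum is about $2.4$ there.

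This does not affect the existence statement, since any $K\ge e^{2}(1+e)$ works. Optimizing over $a$ (near $a\approx 0.43$) brings the crude bound down to roughly $K\approx 19$. Sokal's constant $7.96$ comes from sharper estimates within the same framework.
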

Sokal \cite{S01} proved that one may take $K\leq 7.97$. Fern\'{a}ndez and Procacci \cite{FP08} improved
this to $K\leq 6.91$, and
Jenssen, Patel, and Regts \cite{J24} further improved it to $K \leq 5.94$. More recently, Bencs and
Regts \cite{BR26} established $K \leq 4.25$.
Currently, the best bound on the maximum modulus of chromatic roots is $\rho(G)\leq 4.25\Delta$.

In this paper, we study several properties of $P(G,x)$ arising from
derivatives and higher-order derivatives of related functions. All the results obtained are related to $\rho(G)$.

We first focus on the monotonicity of $P(G,x)/x^n$ for $x\geq \rho(G) $ where $G$ is a graph of order $n$. In 1995, Bartels and Welsh \cite{B1995} conjectured that for any $n$-vertex graph $G$, $$\frac{P(G,n)}{P(G,n-1)}\geq \frac{n^n}{(n-1)^n}>e=2.7182818...$$ holds when they studied the {\it  mean color number} $\mu(G)$ of a graph $G$, that is, the average of number of colors used in all $n$-colorings of $G$. The above inequality is equivalent to $\mu(G)\geq \mu(O_n)$ where $G$ is an $n$-vertex graph and $O_n$ is the empty graph on $n$ vertices. Because the conjecture $\mu(G)\geq \mu(O_n)$ seems intuitively obvious but is hard to prove, it is called ``the shameful conjecture''.
Seymour \cite{S97} proved that for any $n$-vertex graph $G$,
$$\frac{P(G,n)}{P(G,n-1)}\geq\frac{685}{252}= 2.7182539...$$ which came very close to resolving the conjecture.
Ultimately, Dong \cite{D2000} proved that the shameful conjecture is true
by an induction argument.

\begin{theorem}[\cite{D2000}] For every graph of order $n$ and real number $x\geq n$,
\begin{equation}\label{e:D2000}\frac{P(G,x)}{P(G,x-1)}\geq \frac{x^n}{(x-1)^n}.\end{equation} In particular, taking $x=n$, we obtain
$$\frac{P(G,n)}{P(G,n-1)}\geq \frac{n^n}{(n-1)^n}>e.$$
\end{theorem}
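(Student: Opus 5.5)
The plan is to turn the inequality into a statement about the expected number of colours used by a random proper colouring, and then prove that statement by induction on $n$, peeling off one vertex at a time.

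\textbf{Reformulation.} Expand $P(G,x)$ in falling factorials:
$$P(G,x)=\sum_{k}\alpha_k(G)\,(x)_k,$$
where $\alpha_k(G)$ is the number of partitions of $V$ into $k$ nonempty independent sets. Since $(x-1)_k=(x)_k\,\frac{x-k}{x}$, we get
$$\frac{P(G,x-1)}{P(G,x)}=1-\frac{\mathbb{E}_x[K]}{x}.$$
Here $\mathbb{E}_x$ is taken with respect to the weights $\alpha_k(G)(x)_k/P(G,x)$. For integer $x$ these weights are the law of the number $K$ of colours used by a uniform proper $x$-colouring; for real $x\ge n$ they are still nonnegative, because $(x)_k>0$ for $k\le n$.

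Inequality (\ref{e:D2000}) is therefore equivalent to
$$\frac{P(G,x-1)}{P(G,x)}\le\Bigl(1-\frac1x\Bigr)^n .$$
In this form it says that the probability that a fixed colour is missing is at most what it is for the empty graph $O_n$. This is exactly $\mu(G)\ge\mu(O_n)$, with $n$ replaced by $x$.

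\textbf{Induction step.} I would prove the last display by induction on $n$; the case $n=1$ is an equality. For the step, pick a vertex $v$, with freedom to choose it later, for instance of minimum degree. It suffices to show the one-vertex inequality
$$\frac{P(G,x-1)}{P(G,x)}\le\Bigl(1-\frac1x\Bigr)\frac{P(G-v,x-1)}{P(G-v,x)},$$
and then apply the induction hypothesis to $G-v$. This is legitimate because $x\ge n>n-1$.

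\textbf{The vertex inequality.} Write
$$P(G,x)=\sum_{c}\bigl(x-d_c\bigr),$$
where $c$ runs over proper $x$-colourings of $G-v$ and $d_c$ is the number of distinct colours that $c$ uses on $N(v)$. Hence $P(G,x)/P(G-v,x)=x-\mathbb{E}^{G-v}_x[d]$. To make sense of this for real $x$, express $\mathbb{E}^{G-v}_x[d]$ through the falling-factorial expansion. Concretely, split according to the partition of $V\setminus\{v\}$ and how many of its blocks meet $N(v)$; this gives a ratio of polynomials in $x$ with nonnegative weights when $x\ge n$.

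With this notation the vertex inequality becomes
$$\frac{\mathbb{E}^{G-v}_{x-1}[d]}{x-1}\ \ge\ \frac{\mathbb{E}^{G-v}_{x}[d]}{x}.$$
In words: the expected fraction of the palette occupied by the neighbourhood of $v$ does not increase when an extra colour becomes available.

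\textbf{Main obstacle.} This monotonicity is the step I expect to be hardest. The first thing I would try is to write
$$\mathbb{E}_x[d]=\sum_{j}\Pr\bigl(\text{colour } j \text{ appears on } N(v)\bigr)$$
and compare the $x$- and $(x-1)$-colour models by a coupling. Recolour uniformly at random every vertex that carries the deleted colour, and check that each neighbourhood colour class is at least as likely to survive as before. Failing that, I would prove the inequality directly on the falling-factorial weights: cross-multiply, and show that the resulting polynomial in $x$ has nonnegative coefficients when expanded in powers of $(x-n)$. The hypothesis $x\ge n$ should be exactly what guarantees that all the quantities $x-k$ appearing are nonnegative.

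If the single-vertex reduction turns out to be false for some choice of $v$, the fallback is to strengthen the induction hypothesis. I would carry along an inequality for $P(G,x-1)/P(G,x)$ together with its analogue for $G+$ (a vertex joined to a clique), closer in spirit to Dong's original induction.
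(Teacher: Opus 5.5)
The paper gives no proof of this statement; it quotes it from Dong. Your argument therefore has to stand on its own.

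Your reformulation $P(G,x-1)/P(G,x)=1-\mathbb{E}_x[K]/x$ is correct. So is the reduction of the induction step to the vertex inequality $\mathbb{E}^{G-v}_{x-1}[d]/(x-1)\ge \mathbb{E}^{G-v}_{x}[d]/x$. But that inequality carries the whole content of the theorem, you do not prove it, and it is false for your suggested choice of $v$.

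Take a clique $Q$ on $r\ge 8$ vertices, two non-adjacent vertices $a,b$ each joined to all of $Q$, and a vertex $v$ adjacent only to $a$ and $b$. Then $n=r+3$, and $v$ is the unique vertex of minimum degree. In $G-v$, the vertices $a$ and $b$ pick independently and uniformly among the $y-r$ colours not used on $Q$, so $\mathbb{E}^{G-v}_y[d]=2-\frac{1}{y-r}$. At $x=n$ your inequality reads $\frac{3/2}{r+2}\ge\frac{5/3}{r+3}$, which holds only for $r\le 7$. For $r=8$, explicitly,
\[
\frac{P(G,10)}{P(G,11)}=\frac{17}{154}>\frac{40}{363}=\frac{10}{11}\cdot\frac{P(G-v,10)}{P(G-v,11)}.
\]
So the one-vertex step fails at $x=n$, even though the theorem itself holds here with plenty of room. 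Consequently neither a coupling nor an expansion in powers of $x-n$ can establish the vertex inequality for this $v$. All falling-factorial weights are positive in this example, so positivity for $x\ge n$ is not what controls the sign.

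The mechanism is as follows. The neighbours of $v$ are independent, and almost every colour is blocked for them by the rest of the graph. One extra colour then raises their number of free colours by a large relative amount, so $N(v)$ occupies a larger fraction of the palette. This is the opposite of your ``in words'' claim.

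You leave the choice of $v$ open, but you give no rule for it and no argument that a vertex satisfying the inequality always exists. That existence statement, or a different and stronger inductive hypothesis, is precisely the missing idea. Your fallback of carrying $G+$ along as in Dong's induction is only named, not carried out. As written, the proposal reduces the theorem to a lemma that is false in the stated generality and supplies no replacement.
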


In 2015, Fadnavis \cite{F15} showed that the inequality \eqref{e:D2000} holds for all real $x\geq 36\Delta^{3/2}$, i.e.,  $P(G,x)/x^n$ is increasing for all $x\geq 36\Delta^{3/2}$.
In this paper, we improve Fadnavis's results by showing the following theorem.

\begin{theorem}\label{Thm:Main1}If $G$ is a graph of order $n$ and maximum degree $\Delta$, then
$$\frac{P(G,x)}{P(G,x-1)}\geq \frac{x^n}{(x-1)^{n}}$$ for all real $x\geq10\Delta^{3/2}.$
\end{theorem}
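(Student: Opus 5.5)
Write $P(G,x)=\prod_{i=1}^n(x-\alpha_i)$. The claim is equivalent to
\[
\sum_{i=1}^n \ln\frac{1-\alpha_i/x}{1-\alpha_i/(x-1)}\ \ge 0,
\]
and this sum is real because the roots come in conjugate pairs. By Theorem~\ref{Thm:1} with $K=4.25$ (Bencs--Regts), every root satisfies $|\alpha_i|\le 4.25\Delta$. For $x\ge 10\Delta^{3/2}$ we therefore have $|\alpha_i|/x\le 0.425\Delta^{-1/2}$, so each logarithm can be expanded as a convergent power series. Rather than comparing the two sides directly, the plan is to show that $f(t)=\ln P(G,t)-n\ln t$ is nondecreasing on $[x-1,x]$. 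It suffices to prove
\[
\frac{d}{dt}f(t)=\sum_i \frac{\alpha_i}{t(t-\alpha_i)}=\sum_{k\ge1}\frac{p_k}{t^{k+1}}\ \ge 0 \quad\text{for } t\ge 10\Delta^{3/2}-1,
\]
where $p_k=\sum_i\alpha_i^k$ are the power sums of the chromatic roots.

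The power sums have combinatorial meaning via the coefficients. Since $a_{n-1}=m$ (the number of edges), we get $p_1=m$. Next, $p_2=p_1^2-2e_2$, where $e_2=a_{n-2}=\binom m2-T$ and $T$ is the number of triangles. Hence $p_2=m+2T\ge m$. The leading term therefore gives $\sum_k p_k/t^{k+1}\ge m/t^2+p_2/t^3-\sum_{k\ge3}|p_k|/t^{k+1}$. The difficulty is that the crude bound $|p_k|\le n(4.25\Delta)^k$ is in terms of $n$, while the main term is only $m$. To fix this, assume $G$ has no isolated vertices; isolated vertices contribute zero roots and factor out harmlessly, so $m\ge n/2$. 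Then
\[
\sum_{k\ge3}\frac{|p_k|}{t^{k+1}}\le \frac{n}{t}\sum_{k\ge 3}\Bigl(\frac{4.25\Delta}{t}\Bigr)^k\approx \frac{n(4.25\Delta)^3}{t^4}.
\]
We need this to be at most $m/t^2\ge n/(2t^2)$, which amounts to $t^2\gtrsim 2(4.25)^3\Delta^3$, i.e.\ $t\gtrsim 12.4\Delta^{3/2}$. That is close to the target but not quite there.

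To reach the constant $10$, I would keep the $k=2$ term, which is nonnegative since $p_2\ge m$ (it helps only marginally), and, more usefully, obtain $p_3$ exactly. Since $p_3$ is a polynomial in the counts of small subgraphs, it satisfies $|p_3|\le c\,m\Delta$ with an explicit $c$. This removes $\Delta^3$ from the third-order term, so only the terms $k\ge4$ use the crude root bound:
\[
\sum_{k\ge 4}\frac{n(4.25\Delta)^k}{t^{k+1}}\le \frac{2m(4.25\Delta)^4}{t^5(1-4.25\Delta/t)}.
\]
Requiring this to be at most $m/(2t^2)$ gives $t^3\gtrsim 4(4.25)^4\Delta^4$, i.e.\ roughly $t\gtrsim 10\Delta^{4/3}$, which is even better than $\Delta^{3/2}$ for large $\Delta$. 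For small $\Delta$, the condition $x\ge 10\Delta^{3/2}$ already dominates the $k=3$ term once $t\ge 10\Delta^{3/2}-1$.

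The main obstacle is the constant bookkeeping. This includes the shift from $x$ to $t\ge x-1$, the contribution of the $p_3$ term measured against $m/t^2$, and the small cases $\Delta\le 2$, which may need separate direct treatment since there $P(G,x)$ is explicit for paths and cycles. I would first try the cleanest version: use only $p_1=m$, $p_2\ge m$, and the crude tail from $k=3$ onward, but replace $n\le 2m$ with the sharper inequality $n\le 2m/\delta$ when that helps. If the constant $10$ fails for some range, I would add the exact $p_3$ estimate as described above.
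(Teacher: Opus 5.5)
Your setup ($p_1=m$, $p_2=m+2T$, tail bound $|p_k|\le n(4.25\Delta)^k$) matches the paper's. However, the step your refinement rests on is false: there is no constant $c$ with $|p_3|\le c\,m\Delta$. Newton's identity $p_3=e_1p_2-e_2p_1+3e_3$, combined with the known formula $e_3=\binom{m}{3}-(m-2)T-q+2s$ for the third coefficient (here $q$ is the number of chordless $4$-cycles and $s$ the number of $K_4$'s), gives
\[
p_3=m+6T-3q+6s.
\]
For $K_{\Delta,\Delta}$ this equals $\Delta^2-3{\binom{\Delta}{2}}^{2}\approx-\tfrac34\,m\Delta^2$.

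The repair is easy because you only need a lower bound. Every edge lies in at most $(\Delta-1)^2$ four-cycles, so $p_3\ge m-3q\ge m-\tfrac34 m(\Delta-1)^2$. Relative to the main term $m/t^2$, this costs at most $\tfrac34(\Delta-1)^2/t^2<0.002$ for $t\ge 10\Delta^{3/2}-1$. Your half-and-half split also fails slightly: at $\Delta=3$ and $t=10\cdot 3^{3/2}-1\approx 50.96$, the $k\ge 4$ tail with $n\le 2m$ is about $0.53\,m/t^2$. The split is unnecessary, though. Dropping the positive contributions $m/t^3$ and $m/t^4$, it suffices that
\[
1-\frac{3(\Delta-1)^2}{4t^2}-\frac{2\rho^4}{t^2(t-\rho)}>0,\qquad \rho=4.25\Delta,\quad t\ge 10\Delta^{3/2}-1.
\]
The last fraction is decreasing in $t$. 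At $t=10\Delta^{3/2}-1$ it is about $0.75$, $0.53$, $0.43$ for $\Delta=2,3,4$, and it decreases further with $\Delta$. Add the explicit case $\Delta\le 1$, where $P(G,x)/x^n=(1-1/x)^m$. With the $p_3$ bound corrected, your route is a complete proof.

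The paper never needs $p_3$. Instead of deleting isolated vertices and using $n\le 2m$, it does three things:
\begin{itemize}
\item it reduces to connected components, since both sides are multiplicative;
\item it disposes of trees via $P(G,x)/x^n=(1-1/x)^{n-1}$;
\item it uses $m\ge n$ for a connected non-tree.
\end{itemize}
That factor of $2$ is what your first, crude attempt was missing. With it, the tail from $k=3$ on is beaten once $(t+1)(t-\rho)>\rho^3$. The paper verifies this at $t=10\Delta^{3/2}$ for $\rho=4.25\Delta$ and $\Delta\ge 3$, treating cycles separately using $\rho\le 2$.

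Your route costs the third coefficient of $P(G,x)$ but buys two things. First, it gives a threshold of order $\Delta^{4/3}$ for large $\Delta$. Second, it handles the shift from $x$ to $x-1$ properly. The paper only shows $x_0(G)<10\Delta^{3/2}$, i.e.\ that $P(G,x)/x^n$ increases on $[10\Delta^{3/2},\infty)$. The ratio inequality at $x$, however, needs monotonicity on $[x-1,x]$. For $\Delta=3$ and $\rho=12.75$, the paper's criterion gives $x_0\approx 51.92>10\cdot 3^{3/2}-1\approx 50.96$. So the argument as written leaves a window of length about one uncovered. Your exact $p_3$ term closes that window, whether combined with your $n\le 2m$ or with the paper's $m\ge n$.
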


Our second main result concerns the higher-order derivatives of $\ln[(-1)^n P(G, x)]$ with $x<0$.
Let \( P^{(i)}(G, x) \) denote the \( i \)-th derivative of \( P(G, x) \). Recently,
Bernardi and Nadeau \cite{BN20} gave a nice combinatorial interpretation of \( P^{(i)}(G, -j) \)
for any nonnegative integers \( i \) and \( j \) in terms of acyclic orientations.
When \( i = 0 \) and \( j = 0 \),  their result recovers the classical interpretations
due to Stanley \cite{S1973} and to Greene and Zaslavsky \cite{GZ83}, respectively.

In 2020, Dong, Ge, Gong, Ning, Ouyang, and Tay \cite{D21} studied the parameter
$$\epsilon(G)=\sum\limits_{i=1}^{n} (n-i) a_i(G)/\sum\limits_{i=1}^{n} a_i(G)$$
namely the mean size of a broken-cycle-free spanning subgraph of a graph
$G$ of order $n$, where \( a_i(G) \) denotes the number of spanning subgraphs of \( G \)
with \( n - i \) edges that do not contain broken cycles. They proved that
$\epsilon(T_n)<\epsilon(G)<\epsilon(K_n)$ for every connected graph $G$ of order $n$ that is neither the
complete graph $K_n$ nor a tree $T_n$. As a consequence, they confirmed a
conjecture due to Lundow and Markstr\"{o}m \cite{LM06}.
In their proof, they introduced the function $\epsilon(G,x)=P'(G,x)/P(G,x)$, which satisfies
$\epsilon(G)=n+\epsilon(G,-1)$  for any graph $G$ of order $n$. Hence, for any graphs $G$ and $H$ of the same order,
the inequality $\epsilon(G)<\epsilon(H)$ is equivalent to $\epsilon(G,-1)<\epsilon(H,-1)$. In \cite{D21}, the authors proved that for any connected graph $G$ of order $n$ which is neither the complete graph nor a tree, $\epsilon(T_n,x)<\epsilon(G,x)<\epsilon(K_n,x)$ holds for all $x<0$. Then $\epsilon(T_n)<\epsilon(G)<\epsilon(K_n)$ follows.

Clearly, for any graph \( G \) of order \( n \),
\begin{equation*}
\epsilon(G,x)=\frac{P'(G, x)}{P(G, x)}=\frac{d}{dx} (\ln[(-1)^n P(G, x)])< 0
\end{equation*}
holds for all \( x < 0 \). Recall that for $x<0$, \((-1)^n P(G, x)>0\) and \((-1)^n P'(G, x)<0\).
Motivated by this observation, the authors of \cite{D21} conjectured that the same property holds for higher
derivatives of the function \(\ln[(-1)^n P(G, x)]\) for $x<0$.

\begin{conjecture}[\cite{D21}]\label{con:1.4} Let \( G \) be a graph of order \( n \). Then
\[
\frac{d^k}{dx^k} (\ln[(-1)^n P(G, x)]) < 0
\]
holds for all \( k \geq 2 \) and \( x \in (-\infty, 0) \).
\end{conjecture}

In this paper, we prove this conjecture for all \( k \geq 2 \) and \( x\leq -3.01\Delta k \).
\begin{theorem}\label{Thm:Main2}Let \( G \) be a graph of order \( n \). Then
\[
\frac{d^k}{dx^k} (\ln[(-1)^n P(G, x)]) < 0
\]
holds for all \( k \geq 2 \) and $x\leq -3.01\Delta k$.
\end{theorem}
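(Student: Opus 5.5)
We will use the factorization $P(G,x)=\prod_{i=1}^n(x-\alpha_i)$ over the chromatic roots, together with the bound $\rho(G)\le K\Delta$ from Theorem~\ref{Thm:1} with $K\le 4.25$ (Bencs and Regts). For $x<0$ we have $(-1)^nP(G,x)=\prod_i(\alpha_i-x)$, and this product is positive. Hence
\[
\frac{d^k}{dx^k}\ln[(-1)^nP(G,x)] = -(k-1)!\sum_{i=1}^n \frac{1}{(\alpha_i-x)^k},
\]
where the sum is real because the nonreal roots come in conjugate pairs. The claim is therefore equivalent to
\[
\operatorname{Re}\sum_{i=1}^n (\alpha_i - x)^{-k} > 0 .
\]
Writing $t=-x>0$, it is enough to show that $\operatorname{Re}\,(t+\alpha)^{-k}>0$ for every $\alpha$ with $|\alpha|\le\rho(G)$ and every $t\ge 3.01\Delta k$. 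Each term is then positive, and so is the sum.

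Next put $t+\alpha = r e^{i\theta}$. Then $\operatorname{Re}(t+\alpha)^{-k}=r^{-k}\cos(k\theta)$, so we need $|k\theta|<\pi/2$, that is, $|\theta|<\pi/(2k)$. Since $|\alpha|\le \rho$ and $t>\rho$, the argument satisfies $|\theta|\le \arcsin(\rho/t)$. So the sufficient condition is
\[
\frac{\rho}{t} < \sin\!\Big(\frac{\pi}{2k}\Big).
\]
With $\rho\le 4.25\Delta$ and $t\ge 3.01\Delta k$, we have $\rho/t\le 4.25/(3.01k)\approx 1.412/k$. We must compare this with $\sin(\pi/(2k))$, which is at most $\pi/(2k)\approx 1.571/k$.

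This comparison is the main obstacle. The inequality $\sin(\pi/(2k))\ge (2/\pi)(\pi/(2k)) = 1/k$ from concavity is too weak. We need the sharper estimate $\sin(\pi/(2k))>1.412/k$ for all $k\ge2$, and we check it as follows.
\begin{itemize}
\item For $k=2$: $\sin(\pi/4)\approx0.7071>0.706$.
\item For $k=3$: $\sin(\pi/6)=0.5>0.4707$.
\item For general $k$: the function $k\sin(\pi/(2k))$ is increasing in $k$ (because $\sin y/y$ is decreasing on $(0,\pi/2]$), with value $\sqrt2\approx1.4142$ at $k=2$. So $k\sin(\pi/(2k))\ge\sqrt2>4.25/3.01\approx1.41196$ for all $k\ge2$.
\end{itemize}
This explains the constant $3.01$: it is essentially $4.25/\sqrt2\approx 3.005$.

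Finally, we check the details.
\begin{itemize}
\item $t>\rho$ holds, so $t+\alpha$ lies in the open right half-plane and the argument bound $|\theta|\le\arcsin(|\alpha|/t)$ is valid. This bound is the tangent-line geometry of the disc of radius $|\alpha|$ centred at $t$.
\item The inequality is strict, since $\rho\le4.25\Delta<3.01\sqrt2\,\Delta$.
\item Roots at $0$ and real roots are covered, because then $\theta=0$.
\item If $\Delta=0$, then $P(G,x)=x^n$ and the derivative equals $-(k-1)!\,n\,(-x)^{-k}<0$ directly.
\end{itemize}
Summing the positive contributions over all roots then gives the strict negativity in Theorem~\ref{Thm:Main2}.
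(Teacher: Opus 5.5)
Your proof is correct and follows essentially the same route as the paper. Like the paper, you expand $L_G^{(k)}$ over the chromatic roots and confine each $x-\alpha_j$ to a sector of half-angle $\arcsin(\rho(G)/|x|)\le \pi/(2k)$ about the negative real axis; you then combine $\rho(G)\le 4.25\Delta$ with the monotonicity of $k\sin(\pi/(2k))$ (the paper's decreasing $f(t)=\csc(\pi/(2t))/t$) to get $\csc(\pi/(2k))\le k/\sqrt{2}$. The only cosmetic difference is how strictness arises: you use the strict inequality $4.25<3.01\sqrt{2}$ to make every term strictly signed, whereas the paper shows each term is nonpositive and takes strictness from the root at $0$.
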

The paper is organized as
follows.
In Section \ref{Sec:2}, we prove Theorem \ref{Thm:Main1} by following the framework of \cite{F15}
but using a sharper coefficient bound.
In Section \ref{Sec:3}, we prove Theorem \ref{Thm:Main2}.

\section{Monotonicity of $P(G,x)/x^n$}\label{Sec:2}
For a graph $G$ of order $n$, we define $$F_{G}(x):=\ln \frac{P(G,x)}{x^n},~~~~~x>\rho(G).$$
The derivative $F'_G(x) > 0$ is equivalent to the function $P(G,x)/x^n$ being monotonically increasing.
In \cite{F15}, Fadnavis gave the Laurent expansion of $F_G(x)$.
\begin{theorem} [\cite{F15}]\label{Thm:2.1}
For a graph $G$ of order $n$ and $x>\rho(G)$, we have
\[
F_{G}(x)= \sum_{i=1}^\infty c_i x^{-i},
\]
where $c_i=-\dfrac{1}{i}\sum_{j=1}^n \alpha_j^i$, and $\alpha_1,\ldots, \alpha_n$ are the roots of $P(G,x)=0$.
\end{theorem}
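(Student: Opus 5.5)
The statement is Theorem~\ref{Thm:2.1}, the Laurent expansion of $F_G(x)$. I will derive it by factoring $P(G,x)$ over its roots, taking logarithms, and expanding each factor as a power series. Since $P(G,x)=\prod_{j=1}^n (x-\alpha_j)$ is monic of degree $n$, we get
\[
\frac{P(G,x)}{x^n}=\prod_{j=1}^{n}\Bigl(1-\frac{\alpha_j}{x}\Bigr).
\]
For real $x>\rho(G)$ every factor satisfies $|\alpha_j/x|<1$.

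The first step is to deal with the logarithm, because the individual factors may be complex. I would take the principal branch of $\ln(1-z)$ on the disc $|z|<1$, where it is given by the series $-\sum_{i\ge1} z^i/i$. Set
\[
S(x)=\sum_{j=1}^n \ln\Bigl(1-\frac{\alpha_j}{x}\Bigr), \qquad x>\rho(G).
\]
Then $\exp S(x)=P(G,x)/x^n$. This quantity is real and positive, since $P(G,x)$ is a real polynomial with no real roots exceeding $\rho(G)$ and it tends to $+\infty$, so $P(G,x)>0$ there. Hence $S(x)$ and $F_G(x)$ differ by an integer multiple of $2\pi i$.

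The second step is to show that this multiple is zero. The function $S$ is continuous on $(\rho(G),\infty)$, and so is the real function $F_G$, so their difference is a continuous function taking values in $2\pi i\mathbb{Z}$ and is therefore constant. As $x\to\infty$ both $S(x)$ and $F_G(x)$ tend to $0$, so the constant is $0$. (Alternatively, non-real roots occur in conjugate pairs, so the imaginary parts of the corresponding terms of $S$ cancel.) This branch bookkeeping is the only delicate point of the argument.

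The last step is to expand and regroup:
\[
F_G(x)=-\sum_{j=1}^n\sum_{i\ge1}\frac{\alpha_j^i}{i\,x^i}.
\]
Each inner series converges absolutely because $|\alpha_j|/x\le\rho(G)/x<1$, and the outer sum is finite, so the order of summation may be exchanged. This gives
\[
F_G(x)=\sum_{i\ge1} c_i x^{-i}, \qquad c_i=-\frac1i\sum_{j=1}^n\alpha_j^i.
\]
Each $c_i$ is real, being a power sum of the roots of a real polynomial.
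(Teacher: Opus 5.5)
The paper gives no proof of this statement (it is quoted from Fadnavis), but your argument is the standard one suggested by the formula for $c_i$, and it is correct and complete. You factor $P(G,x)/x^n=\prod_{j}(1-\alpha_j/x)$, take principal logarithms, expand, and exchange the finite sum over $j$ with the absolutely convergent series over $i$; your treatment of the branch issue is also sound, either by noting that the continuous function $S-F_G$ takes values in $2\pi i\mathbb{Z}$ and tends to $0$, or by conjugate pairing. The one point you leave implicit is that $x>\rho(G)\ge 0$ gives $x^n>0$, so $F_G$ is a genuine real logarithm. As a bonus, your derivation exhibits $F_G$ as a power series in $1/x$ with radius of convergence at least $1/\rho(G)$, which is exactly what justifies the termwise differentiation used later in Lemma~\ref{le0}.
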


Fadnavis \cite{F15} also deduced the exact values of $c_1$ and $c_2$.
\begin{lemma} [\cite{F15}] \label{Lem:2.2}
Let $|T|$ be the number of triangles in $G$.
Then $c_1=-|E|$ and $c_2=-|T|-|E|/2.$
\end{lemma}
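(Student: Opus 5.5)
Since $c_i=-\frac1i p_i$ with $p_i=\sum_{j=1}^n\alpha_j^i$ the $i$-th power sum of the chromatic roots, it suffices to compute $p_1=\sum_j\alpha_j$ and $p_2=\sum_j\alpha_j^2$ from the top coefficients of $P(G,x)$ via Newton's identities. We then need to show $p_1=|E|$ and $p_2=|E|+2|T|$, which give $c_1=-|E|$ and $c_2=-\tfrac12(|E|+2|T|)=-|T|-|E|/2$.

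\emph{Step 1: the coefficients $a_{n-1}$ and $a_{n-2}$.} Using Whitney's broken-cycle expansion $P(G,x)=\sum_i(-1)^{n-i}a_i(G)x^i$, we have $a_n=1$. Next, $a_{n-1}$ counts single-edge spanning subgraphs; no single edge contains a broken cycle (broken cycles have at least two edges), so $a_{n-1}=|E|$. Finally, $a_{n-2}$ counts two-edge sets containing no broken cycle. The only broken cycles with at most two edges come from triangles: each triangle contributes exactly one broken cycle, namely the triangle minus its $\eta$-minimal edge, a two-edge path. So $a_{n-2}=\binom{|E|}{2}-|T|$. Alternatively, this can be read off the inclusion--exclusion formula $\sum_{E'}(-1)^{|E'|}x^{\kappa(E')}$: the $x^{n-2}$ terms come from two-edge sets (each forest of rank $2$) and from three-edge sets forming a triangle (rank $2$, sign $-1$). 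This again gives $a_{n-2}=\binom{|E|}2-|T|$.

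\emph{Step 2: Newton's identities.} From $P(G,x)=\prod(x-\alpha_j)=x^n-e_1x^{n-1}+e_2x^{n-2}-\cdots$, matching coefficients gives $e_1=a_{n-1}=|E|$ and $e_2=a_{n-2}$. Hence $p_1=e_1=|E|$, and
\[
p_2=e_1^2-2e_2=|E|^2-2\binom{|E|}{2}+2|T|=|E|+2|T|.
\]
Substituting into $c_i=-p_i/i$ from Theorem~\ref{Thm:2.1} yields the claim.

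The only step needing care is the count $a_{n-2}=\binom{|E|}2-|T|$, specifically that distinct triangles give distinct broken cycles and that no other two-edge set is excluded. Two distinct triangles cannot share the same two-edge path, since a path $uvw$ determines the triangle through the edge $uw$. So the count is exact, and everything else is routine bookkeeping.
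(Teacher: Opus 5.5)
Your proof is correct. You read $a_{n-1}=|E|$ and $a_{n-2}=\binom{|E|}{2}-|T|$ off Whitney's expansion, including the needed check that distinct triangles exclude distinct two-edge paths, and then apply $p_1=e_1$ and $p_2=e_1^2-2e_2$ with Theorem~\ref{Thm:2.1} to get exactly $c_1=-|E|$ and $c_2=-|T|-|E|/2$.

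The paper gives no proof of this lemma and simply quotes it from Fadnavis. Your derivation from the top three coefficients of $P(G,x)$ is a standard, complete justification of the cited statement.
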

For the remaining coefficients $c_i$,  by Theorem \ref{Thm:1}, we have
\begin{equation}\label{e4}
|c_i| \leq \frac{1}{i} \sum\limits_{j=1}^n |\alpha_j|^i \leq \frac{n}{i}(\rho(G))^i.
\end{equation}

These yield the following intermediate criterion.

\begin{lemma}\label{le0} Let $G$ be a graph with $n$ vertices, $m$ edges, $t$ triangles, and $\rho(G)$ be
the maximum modulus of the roots of  $P(G,x)$. Then, for all real $x > \rho(G)$,
\begin{eqnarray}\label{e0}
F'_G(x) \geq \frac{m}{x^2} + \frac{2t + m}{x^3} - \frac{n(\rho(G))^3}{x^3(x - \rho(G))}.\end{eqnarray}
Consequently, if
\begin{eqnarray}\label{e1}
(mx + 2t + m)(x - \rho(G)) > n(\rho(G))^3, \end{eqnarray}
then $F'_G(x) > 0$.
\end{lemma}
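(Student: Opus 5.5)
The plan is to differentiate the Laurent expansion of Theorem \ref{Thm:2.1} term by term and then use Lemma \ref{Lem:2.2} together with the bound \eqref{e4}. For $x>\rho(G)$ the series $F_G(x)=\sum_{i\ge1}c_ix^{-i}$ converges absolutely, since $|c_i|x^{-i}\le \frac{n}{i}(\rho(G)/x)^i$. It also converges locally uniformly, and so do its termwise derivatives. Hence
\[
F'_G(x)=-\sum_{i\ge1} i\,c_i x^{-i-1}.
\]
By Lemma \ref{Lem:2.2}, the first two terms are $-c_1x^{-2}=m/x^2$ and $-2c_2x^{-3}=(2t+m)/x^3$.

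For the tail ($i\ge3$), apply \eqref{e4} in the form $i|c_i|\le n\rho(G)^i$:
\[
\Bigl|\sum_{i\ge3} i c_i x^{-i-1}\Bigr|\le \frac{n}{x}\sum_{i\ge3}\Bigl(\frac{\rho(G)}{x}\Bigr)^i=\frac{n}{x}\cdot\frac{(\rho(G)/x)^3}{1-\rho(G)/x}=\frac{n\rho(G)^3}{x^3(x-\rho(G))}.
\]
Subtracting this bound from the two exact terms gives \eqref{e0}.

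For the consequence, multiply the right side of \eqref{e0} by $x^3(x-\rho(G))>0$. This gives $(mx+2t+m)(x-\rho(G))-n\rho(G)^3$, so \eqref{e1} makes the right side of \eqref{e0} positive, and therefore $F'_G(x)>0$.

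The only delicate point is justifying termwise differentiation. This is standard for power series in $1/x$ inside the disk of convergence $|1/x|<1/\rho(G)$. Alternatively, one can avoid it by differentiating $F_G(x)=\sum_j\ln(1-\alpha_j/x)$ directly, which gives $F'_G(x)=\sum_j \frac{\alpha_j}{x(x-\alpha_j)}$, and expanding each term geometrically. Note also that $x>\rho(G)\ge0$, so $F_G$ is real there.
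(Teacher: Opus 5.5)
Your proposal is correct and follows essentially the same route as the paper: differentiate the Laurent series term by term, take the first two terms exactly from Lemma \ref{Lem:2.2}, bound the tail via $i|c_i|\le n\rho(G)^i$ and sum it as a geometric series, then multiply by $x^3(x-\rho(G))>0$ to get the consequence. Your justification of termwise differentiation, or alternatively the direct formula $F'_G(x)=\sum_j \alpha_j/(x(x-\alpha_j))$, supplies a step the paper leaves implicit.
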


\begin{proof}By differentiating the Laurent expansion of $F_G$ in Theorem \ref{Thm:2.1}, and using Lemma \ref{Lem:2.2} and inequality \eqref{e4}, we get
\begin{align*}
F'_G(x) &= \frac{m}{x^2} + \frac{2t + m}{x^3} - \sum_{i \geq 3} i c_i x^{-i-1} \\
&\geq \frac{m}{x^2} + \frac{2t + m}{x^3} - \sum_{i \geq 3} n (\rho(G))^i x^{-i-1} \\
&= \frac{m}{x^2} + \frac{2t + m}{x^3} - \frac{n (\rho(G))^3}{x^4} \sum_{j \geq 0} \left( \frac{\rho(G)}{x} \right)^j \\
&= \frac{m}{x^2} + \frac{2t + m}{x^3} - \frac{n (\rho(G))^3}{x^3 (x - \rho(G))}.
\end{align*}
This proves \eqref{e0}. If \eqref{e1} holds, then the right-hand side of \eqref{e0} is positive, so $F'_G(x) > 0$.
\end{proof}

\begin{lemma}\label{le1}Let $G$ be a connected graph that is not a tree. Then $F'_G(x) > 0$ whenever
\begin{eqnarray}\label{e2}
(x + 1)(x - \rho(G)) > (\rho(G))^3. \end{eqnarray}
Furthermore, $F_G$ is monotonically increasing on
$(x_0(G), \infty),$  where
$$x_0(G) := \frac{\rho(G) - 1 + \sqrt{(\rho(G) + 1)^2 + 4(\rho(G))^3}}{2}.$$

\end{lemma}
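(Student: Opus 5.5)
We derive this from Lemma \ref{le0}, using the sufficient condition \eqref{e1}. The idea is to show that for a connected graph that is not a tree, \eqref{e2} implies \eqref{e1}. The key point is a comparison of $m$ and $n$: a connected non-tree graph has $m\geq n$. Also $t\geq 0$ trivially.

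First consider $x>\rho(G)$, so that $x-\rho(G)>0$. Then
$$(mx+2t+m)(x-\rho(G))\geq m(x+1)(x-\rho(G))\geq n(x+1)(x-\rho(G)).$$
If \eqref{e2} holds, the right-hand side exceeds $n(\rho(G))^3$. This gives \eqref{e1}, and Lemma \ref{le0} yields $F'_G(x)>0$.

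One subtlety must be handled. Lemma \ref{le0} is stated only for $x>\rho(G)$, whereas \eqref{e2} can also hold for very negative $x$. So we need to restrict to the relevant branch. The quadratic inequality
$$(x+1)(x-\rho(G))>(\rho(G))^3 \iff x^2-(\rho(G)-1)x-\rho(G)-(\rho(G))^3>0$$
has roots
$$\frac{\rho(G)-1\pm\sqrt{(\rho(G)-1)^2+4\rho(G)+4(\rho(G))^3}}{2}.$$
Since $(\rho(G)-1)^2+4\rho(G)=(\rho(G)+1)^2$, the larger root is exactly $x_0(G)$.

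Next check that $x_0(G)\geq\rho(G)$. This is equivalent to
$$\sqrt{(\rho(G)+1)^2+4(\rho(G))^3}\geq \rho(G)+1,$$
which is immediate. So every $x>x_0(G)$ satisfies both $x>\rho(G)$ and \eqref{e2}. Hence $F'_G(x)>0$ on $(x_0(G),\infty)$, and $F_G$ is monotonically increasing there. We interpret the first assertion as applying to $x>\rho(G)$, which is the domain of $F_G$.

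The main obstacle is only this bookkeeping: using $m\geq n$ correctly, and taking the right root of the quadratic so that Lemma \ref{le0} is applicable. There is no substantive analytic difficulty.
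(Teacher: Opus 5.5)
Your proof is correct and follows the paper's argument. Both use the chain $(mx+2t+m)(x-\rho(G))\geq m(x+1)(x-\rho(G))\geq n(x+1)(x-\rho(G))$ from $t\geq 0$ and $m\geq n$, apply Lemma \ref{le0}, and identify $x_0(G)$ as the larger root of the quadratic. Your extra bookkeeping (restricting to $x>\rho(G)$, simplifying the discriminant, and checking $x_0(G)\geq\rho(G)$) makes explicit what the paper leaves implicit.
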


\begin{proof}If $G$ is a connected graph with $n$ vertices, $m$ edges, $t$ triangles and is not a tree, then $m \geq n$ and $t \geq 0$. Hence
\[
(mx + 2t + m)(x - \rho(G)) \geq m(x + 1)(x - \rho(G)) \geq n(x + 1)(x - \rho(G)).
\]
Therefore, inequality \eqref{e2} implies inequality \eqref{e1}.  From Lemma \ref{le0},  $F'_G(x) > 0$ follows. And one can compute that the explicit expression for $x_0(G)$ is just the larger root of the equation $(x + 1)(x - \rho(G)) = (\rho(G))^3$. The proof is complete.
\end{proof}

Now we are ready to prove Theorem \ref{Thm:Main1}.

\begin{proof}[\bf{Proof of Theorem \ref{Thm:Main1}}] It suffices to consider connected graphs, because if the theorem holds for
every connected component of a disconnected graph, then the theorem also holds for this disconnected graph due to the multiplicativity of the chromatic polynomial.

If $G$ is a tree, then
$$P(G,x)=x(x-1)^{n-1},  ~~~\mbox{and}~~~\frac{P(G,x)}{x^n}=(1-\frac{1}{x})^{n-1}.$$
Note that $P(G,x)/x^n$ is increasing for $x>1$. So we may assume $G$ is not a tree.

We will discuss the following cases according to the maximum degree $\Delta$ of $G$.

\noindent{\bf Case 1:} $\Delta=2$.

In this case, $G$ must be a cycle whose chromatic polynomial is
$$P(G,x)=(x-1)^n+(-1)^n(x-1).$$ We now determine its
chromatic roots and hence obtain the maximum modulus of the roots.
Let $(x-1)^n+(-1)^n(x-1)=0$. Clearly, $x=1$ is a root. Then we set \begin{eqnarray}\label{e3}(x-1)^{n-1}=(-1)^{n-1}\end{eqnarray} to solve for the other roots.
Let $x-1=re^{i\theta}$, because $-1=e^{i\pi}$, the equation \eqref{e3} reduces to
\begin{eqnarray*}r^{n-1}e^{i(n-1)\theta}=e^{i(n-1)\pi},
\end{eqnarray*} which yields the two equations
\begin{eqnarray*}r^{n-1}=1,~~~\mbox{and}~~~ (n-1)\theta=(n-1)\pi+2k\pi,~~~k=0,1,\ldots,n-2.
\end{eqnarray*} Solving these equations, we obtain $r=1$, $\theta=\frac{(n-1)\pi+2k\pi}{n-1}$. Hence the roots of  equation \eqref{e3} are
\begin{eqnarray*}x=1+\cos\theta+i\sin\theta=1+e^{i\frac{(n-1)\pi+2k\pi}{n-1}}, ~~~k=0,1,\ldots,n-2.
\end{eqnarray*}
One can compute that the moduli of these roots are
$$|x|=\sqrt{(1+\cos\theta)^2+\sin^2\theta}=2|\sin\frac{k\pi}{n-1}|, ~~~k=0,1,\ldots,n-2.$$
Then we can check that the maximum moduli of the roots $\rho(G)\leq 2$,
with equality if and only if $G$ is an odd cycle (i.e., when $k=\frac{n-1}{2}$).

By Lemma \ref{le1}, when $G$ is a cycle, $F_{G}(x)$ is increasing
whenever $$x>\frac{2-1+\sqrt{3^2+4\times8}}{2}=\frac{1+\sqrt{41}}{2}=3.70156....$$
Moreover, when $x\geq10\Delta^{3/2}$ with $\Delta=2$, the inequality $x> 3.71$ is trivially satisfied.

\noindent{\bf Case 2:} $\Delta\geq 3$.

In this case, $\rho(G)\leq 4.25\Delta$. By Lemma \ref{le1},
it suffices to find a
constant C such that  $C\Delta^{3/2}\geq x_0(G)$, where $x_0(G)$ is defined in Lemma \ref{le1}.
Indeed, we seek $C$ for which
$$( C \Delta^{3/2} + 1 )( C \Delta^{3/2} - \frac{17}{4} \Delta ) > (\frac{17}{4} \Delta)^3.$$
We define $$H(y):=( C y^{3/2} + 1 )( C y^{3/2} - \frac{17}{4}y )- (\frac{17}{4} y)^3, ~~~y\geq 3.$$
Then, $$H'(y)=3(C^2 - \frac{4913}{64})y^2 -\frac{85}{8}C(\sqrt{y})^3+\frac{3}{2}C\sqrt{y} - \frac{17}{4}.$$
When $C\geq 9$, we have $C^2 - 4913/64>0$ and $\frac{3}{2}C\sqrt{y}> \frac{17}{4}$.
Furthermore, we compute that if $C> 9.85$, then
\begin{eqnarray}\label{e5}3(C^2 - \frac{4913}{64})\sqrt{3} -\frac{85}{8}C> 0.\end{eqnarray}
When $C>9.85$ and $y\geq 3$, we have
$$(\sqrt{y})^3\big(3(C^2 - \frac{4913}{64})\sqrt{y} -\frac{85}{8}C\big)> 0.$$
Hence $H'(y)>0$, so $H(y)$ is increasing for $y\geq 3$ whenever $C>9.85$.
This allows us to focus on the smallest value of $C$ for which $H(3)>0$, where
$$H(3)=27C^2-\frac{141\sqrt{3}}{4}C-\frac{133467}{64}.$$
One can compute that when $C\geq 10$, $H(3)>0$. The proof is complete.
\end{proof}

\begin{remark} In the proof of Theorem \ref{Thm:Main1}, we deal with the case $\Delta=2$ separately, aiming to seek a smaller constant
$C$. If we unify the two cases, then $H(y)$ will be defined on $y\geq 2$,  and the inequality \eqref{e5} will become
$$3(C^2 - \frac{4913}{64})\sqrt{2} -\frac{85}{8}C> 0.$$ Consequently, we need a constant $C$
greater than 10 for this inequality to hold.
\end{remark}

\begin{remark} In both our proof and the proof in \cite{F15}, the $\rho(G)$ is needed to bound the absolute value of
$c_i$ as in \eqref{e4}. Thus, a new bound on $\rho(G)$ gives a new range of
$x$ on which $P(G,x)/x^n$ is monotonically increasing. In  \cite{F15}, Fadnavis used the bound $\rho(G)\leq 8\Delta$, which has now been improved to
$\rho(G)\leq 4.25\Delta$. Substituting this new bound into Fadnavis's proof would yield that $P(G,x)/x^n$ is monotonically increasing for $x \geq 14\Delta^{3/2}$.
This result is still weaker than ours, because our proof employs a sharper coefficient bound $c_i\leq \frac{n}{i}(\rho(G))^i$ instead of replacing $n$ by $2|E|$.
\end{remark}

\section{Higher  derivatives of $\ln[(-1)^n P(G, x)]$}\label{Sec:3}
In this section, we first prove that each summand in
equation (\ref{e6}) below is nonpositive for $x\leq -\rho(G)\csc(\frac{\pi}{2k})$. Theorem \ref{Thm:Main2} then follows from an elementary estimate
on $\csc\left(\frac{\pi}{2k}\right)$ and $\rho(G)\leq 4.25\Delta$.

For a graph $G$ of order $n$, we write
$$L_G(x):=\ln[(-1)^n P(G, x)], ~~~~~x<0.$$
Let $\alpha_1,\dots,\alpha_n$ be the roots of $P(G,x)=0$. Then $P(G,x)=\prod_{j=1}^n (x-\alpha_j)$.
For $k\ge 1$ and $x<0$, one computes the $k$-th derivative of $L_G(x)$,
\[
L_G^{(k)}(x)=(-1)^{k-1}(k-1)!\sum_{j=1}^n \frac{1}{(x-\alpha_j)^k}.
\]
By taking real parts, we have
\begin{eqnarray}\label{e6}L_G^{(k)}(x)&=&\Re\Big((-1)^{k-1}(k-1)!\sum_{j=1}^n \frac{1}{(x-\alpha_j)^k}\Big)\nonumber\\
&=&(k-1)!\sum_{j=1}^n \Re\left(\frac{(-1)^{k-1}}{(x-\alpha_j)^k}\right),
\end{eqnarray}
because $L_G^{(k)}(x)$ is real and the real part operator $\Re$ is linear over addition.

The following lemma is the main tool in this section.

\begin{lemma}\label{mlemma} Let $R>0$ and $x<0$ satisfy
\[
|x| \geq R \csc\left(\frac{\pi}{2k}\right).
\]
Then for all integers $k\geq 2$ and $z\in\mathbb{C}$ with $|z|\leq R$,
\[
\Re\left(\frac{(-1)^{k-1}}{(x-z)^k}\right) \leq 0.
\]
Moreover, the inequality is strict when $z=0$.
\end{lemma}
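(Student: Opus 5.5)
Write $w = x - z$. Since $x<0$, it is cleaner to write $x-z = -(|x| + z)$, and set $u = |x| + z$. Then $(x-z)^k = (-1)^k u^k$, and
\[
\frac{(-1)^{k-1}}{(x-z)^k} = \frac{(-1)^{k-1}}{(-1)^k u^k} = -\frac{1}{u^k}.
\]
So the claim reduces to showing $\Re(u^{-k}) \geq 0$, i.e. $\Re(\bar u^{k})\ge 0$, which is equivalent to $\Re(u^k)\ge 0$.

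The plan is to control the argument of $u$. Write $u = |u|e^{i\varphi}$ with $\varphi\in(-\pi/2,\pi/2)$. We have $\Re(u^k) = |u|^k\cos(k\varphi)$, so it suffices to show $|\varphi| \le \pi/(2k)$, since then $|k\varphi|\le\pi/2$ and $\cos(k\varphi)\ge 0$.

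The argument bound comes from elementary geometry. The point $u$ lies in the closed disk of radius $R$ centered at the positive real number $|x|$. Because $|x|\ge R\csc(\pi/(2k)) > R$, this disk avoids the origin. The largest angle any of its points makes with the positive real axis is attained on the tangent lines from $0$, and that angle equals $\arcsin(R/|x|)$. Concretely, $|\Im u|\le$ distance-type bound: $\sin|\varphi| = |\Im u|/|u| \le R/|x|$. This follows because the perpendicular distance from the center $|x|$ to the ray of angle $\varphi$ is $|x|\sin|\varphi|$, and that ray must meet the disk, so $|x|\sin|\varphi|\le R$. Hence $\sin|\varphi|\le R/|x|\le \sin(\pi/(2k))$. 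Since both $|\varphi|$ and $\pi/(2k)$ lie in $[0,\pi/2]$, where $\sin$ is increasing, we get $|\varphi|\le \pi/(2k)$.

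For strictness at $z=0$, we have $u=|x|>0$, so $-1/u^k = -|x|^{-k}<0$.

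The only delicate step is this angle bound: making sure $\Re u>0$ (which follows from $|x|>R$) so that $\varphi$ is well defined in $(-\pi/2,\pi/2)$, and making the tangent-line argument rigorous. I would do the latter via the distance inequality above rather than by pictures.
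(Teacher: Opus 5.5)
Your proof is correct and is essentially the paper's argument. The paper writes $x-z=|x-z|e^{i(\pi+\theta)}$ with $|\theta|\le\arcsin(R/|x|)\le\pi/(2k)$ and gets $-|x-z|^{-k}\cos(k\theta)\le 0$; this is exactly your reflection $u=-(x-z)$ with $\varphi=\theta$. Your perpendicular-distance bound $|x|\sin|\varphi|\le|z|\le R$, together with your observation that $|x|\ge\sqrt{2}R>R$, just makes rigorous the ``elementary geometry'' step (the disk lies in the sector) that the paper states without proof.
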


\begin{proof}Set $\phi := \arcsin\left(\frac{R}{|x|}\right)$.
Since $|z|\leq R$, the point $x-z$ lies in the closed disk centered at the negative real number $x$ with radius $R$.
Elementary geometry shows that this disk is contained in the sector
$\{ r e^{i(\pi+\theta)} : r>0,\ |\theta|\leq\phi \}$.
Hence we may write
\[
x-z = |x-z| e^{i(\pi+\theta)}
\]
with $|\theta|\leq\phi$. Therefore
\begin{eqnarray*}\Re(\frac{(-1)^{k-1}}{(x-z)^k})&=& \Re( (-1)^{k-1} |x-z|^{-k} e^{-ik(\pi+\theta)})\\
&=& (-1)^{k-1} |x-z|^{-k}\cos(k(\pi+\theta))\\
&=& - |x-z|^{-k} \cos(k\theta).
\end{eqnarray*}
We set $\phi\leq \frac{\pi}{2k}$, then we have $\cos(k\theta)\geq 0$, and hence $- |x-z|^{-k} \cos(k\theta)\leq 0$ follows.
Because $\phi\leq \frac{\pi}{2k}$ implies
$|x| \geq R \csc\left(\frac{\pi}{2k}\right),$ the lemma follows.
If $z=0$, then $\theta=0$. Therefore, $\Re\left(\frac{(-1)^{k-1}}{(x-z)^k}\right)=-|x|^{-k}<0$.  The proof is complete.
\end{proof}

The range of $x$ in the above lemma can be extended to all nonzero real numbers.

\begin{lemma}\label{elemma}Let $R>0$ and $x\in \mathbb{R}\setminus \{0\}$ satisfy
\[
|x| \geq R \csc\left(\frac{\pi}{2k}\right).
\]
Then for all integers $k\geq 2$ and $z\in\mathbb{C}$ with $|z|\leq R$,
\[
\Re\left(-\frac{\operatorname{sgn}(x)^{k}}{(x-z)^k}\right) \leq 0.
\]
Moreover, the inequality is strict when $z=0$.
\end{lemma}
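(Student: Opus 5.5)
The plan is to split according to the sign of $x$. For $x<0$ I will reduce directly to Lemma \ref{mlemma}. For $x>0$ I will reduce to Lemma \ref{mlemma} through the reflection $x\mapsto -x$, $z\mapsto -z$.

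\emph{Case $x<0$.} Here $\operatorname{sgn}(x)=-1$, so
\[
-\frac{\operatorname{sgn}(x)^k}{(x-z)^k}=\frac{-(-1)^k}{(x-z)^k}=\frac{(-1)^{k-1}}{(x-z)^k}.
\]
The claim is then exactly Lemma \ref{mlemma}, including the strictness when $z=0$.

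\emph{Case $x>0$.} Here $\operatorname{sgn}(x)^k=1$, and the expression to control is $-\Re\bigl((x-z)^{-k}\bigr)$. Put $x'=-x<0$ and $z'=-z$. Then $|z'|=|z|\le R$ and $|x'|=|x|\ge R\csc\left(\frac{\pi}{2k}\right)$. Since $x-z=-(x'-z')$, we get
\[
(x-z)^{-k}=(-1)^k(x'-z')^{-k},
\]
and hence
\[
-\frac{1}{(x-z)^k}=\frac{(-1)^{k+1}}{(x'-z')^k}=\frac{(-1)^{k-1}}{(x'-z')^k}.
\]
Applying Lemma \ref{mlemma} to the pair $(x',z')$ gives real part $\le 0$. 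If $z=0$ then $z'=0$, so the inequality is strict.

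The argument is purely algebraic once the reflection is made, so I do not expect a real obstacle. The only point needing care is the sign bookkeeping: checking that $(-1)^k\cdot(-1)=(-1)^{k-1}$ in both cases. Alternatively, one could redo the sector argument directly for $x>0$: write $x-z=|x-z|e^{i\theta}$ with $|\theta|\le\arcsin(R/|x|)\le\frac{\pi}{2k}$, so that $\Re\bigl(-(x-z)^{-k}\bigr)=-|x-z|^{-k}\cos(k\theta)\le0$. The reflection argument avoids repeating that computation.
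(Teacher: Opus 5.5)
Your proof is correct, but it takes a different route from the paper's. The paper does not invoke Lemma \ref{mlemma} at all. It reruns the sector argument for both signs at once: it sets $\alpha\in\{0,\pi\}$ to be the argument of $x$, places $x-z$ in the sector $\{re^{i(\alpha+\theta)}: r>0,\ |\theta|\le\phi\}$ with $\phi=\arcsin(R/|x|)\le\frac{\pi}{2k}$, and uses $e^{-ik\alpha}=\operatorname{sgn}(x)^k$ to obtain $-|x-z|^{-k}\cos(k\theta)\le 0$. This is essentially the ``alternative'' you sketch at the end. Your main argument instead uses that the whole statement is invariant under $(x,z)\mapsto(-x,-z)$: $|x|$ and $|z|$ are unchanged, and $-\operatorname{sgn}(x)^k/(x-z)^k$ is unchanged, since both numerator and denominator pick up a factor $(-1)^k$. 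Combined with $-(-1)^k=(-1)^{k-1}$, this makes the $x<0$ case literally Lemma \ref{mlemma} and reduces the $x>0$ case to it. Your sign bookkeeping is right in both cases, and strictness transfers because $z=0$ if and only if $z'=0$. Your route shows that Lemma \ref{elemma} is a formal corollary of Lemma \ref{mlemma} requiring no new geometry. The paper's route gives a self-contained, uniform computation that does not depend on the earlier lemma.
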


\begin{proof}Set
\[
\phi := \arcsin\left(\frac{R}{|x|}\right) \leq \frac{\pi}{2k}.
\]
Let
\[
\alpha :=
\begin{cases}
0, & x > 0, \\
\pi, & x < 0.
\end{cases}
\]
Since $|z| \leq R$, the point $x-z$ lies in the closed disk centered at the real number $x$ with radius $R$.
Elementary geometry shows that this disk is contained in the sector
\[
\{ r e^{i(\alpha+\theta)} : r > 0,\ |\theta| \leq \phi \}.
\]
Hence we may write
\[
x - z = |x - z| e^{i(\alpha+\theta)}
\]
with
\[
|\theta| \leq \phi \leq \frac{\pi}{2k}.
\]
Therefore,
\[
\Re\left(-\frac{\operatorname{sgn}(x)^k}{(x-z)^k}\right)
= \Re\left(-\operatorname{sgn}(x)^k |x-z|^{-k} e^{-ik(\alpha+\theta)}\right).
\]
Now, by the definition of $\alpha$, we have
\[
e^{-ik\alpha} = \operatorname{sgn}(x)^k.
\]
Thus
\[
\Re\left(-\frac{\operatorname{sgn}(x)^k}{(x-z)^k}\right)
= \Re\left(-|x-z|^{-k} e^{-ik\theta}\right)
= -|x-z|^{-k} \cos(k\theta) \leq 0,
\]
because $|k\theta| \leq \pi/2$ implies $\cos(k\theta) \geq 0$.

If $z = 0$ then $\theta = 0$. Therefore, $\Re\left(-\frac{\operatorname{sgn}(x)^{k}}{(x-z)^k}\right)=-|x|^{-k} < 0$.
The proof is complete.
\end{proof}

\begin{theorem} \label{t1}For a graph $G$ of order $n$ and $k\geq 2$,
\[L_G^{(k)}(x)<0 \] holds for all $x\leq -4.25\Delta\csc(\frac{\pi}{2k})$.\end{theorem}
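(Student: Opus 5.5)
The plan is to combine formula \eqref{e6} with Lemma \ref{mlemma}, taking $R=\rho(G)$, and then use the bound $\rho(G)\le 4.25\Delta$ of Bencs and Regts. First I would dispose of the degenerate case $\Delta=0$. Then $G$ is the empty graph $O_n$, so $P(G,x)=x^n$. Hence $L_G(x)=n\ln(-x)$, and
\[
L_G^{(k)}(x)=(-1)^{k-1}(k-1)!\,n\,x^{-k}=-(k-1)!\,n\,|x|^{-k}<0
\]
for $x<0$. In this case the condition $x\le -4.25\Delta\csc(\frac{\pi}{2k})$ must be read as $x<0$, since $L_G$ is only defined for $x<0$.

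Now assume $\Delta\ge 1$. Every chromatic root satisfies $|\alpha_j|\le\rho(G)\le 4.25\Delta$. For $\Delta\ge 3$ this is the bound quoted after Theorem \ref{Thm:1}. For $\Delta\le 2$ it also holds: for $\Delta=1$ the roots lie in $\{0,1\}$, and for $\Delta=2$ every component is a path or a cycle, so the roots lie in $\{0,1\}$ together with the cycle roots of modulus at most $2$ computed in Case 1 of the proof of Theorem \ref{Thm:Main1}.

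Set $R:=4.25\Delta>0$. For $x\le -4.25\Delta\csc(\frac{\pi}{2k})$ we have $x<0$ and $|x|\ge R\csc(\frac{\pi}{2k})$. Lemma \ref{mlemma} applied to each $z=\alpha_j$ then gives
\[
\Re\Big(\frac{(-1)^{k-1}}{(x-\alpha_j)^k}\Big)\le 0 \quad\text{for every } j.
\]
By \eqref{e6}, $L_G^{(k)}(x)\le 0$.

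For strict inequality I would use the strictness clause of Lemma \ref{mlemma} at $z=0$. Since $n\ge 1$, the constant term of $P(G,x)$ vanishes (equivalently $a_0$ does not appear in the expansion), so $0$ is a chromatic root, say $\alpha_1=0$. Its summand equals $-|x|^{-k}<0$. The other summands are $\le 0$, and the factor $(k-1)!$ is positive, so $L_G^{(k)}(x)<0$.

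The only delicate point I expect is making sure the root bound $\rho(G)\le 4.25\Delta$ is valid for all $\Delta$, including the small cases, and not only as a statement about large $\Delta$. I would handle this by the explicit small-$\Delta$ check above; disconnected graphs need no separate treatment, since their roots are the union of the roots of their components. Everything else is a direct application of \eqref{e6} and Lemma \ref{mlemma}.
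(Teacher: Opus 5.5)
Your proposal is correct and follows the paper's proof essentially step by step. Like the paper, you treat $\Delta=0$ separately, apply Lemma~\ref{mlemma} to every root through \eqref{e6}, and get strictness from the root $0$. The only differences are cosmetic. You take $R=4.25\Delta$ directly, whereas the paper takes $R=\rho(G)$ and then invokes $\rho(G)\le 4.25\Delta$. You also verify the root bound by hand for $\Delta\le 2$, which is harmless; the paper simply cites the Bencs--Regts bound for all $\Delta\ge 1$.
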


\begin{proof} Let $\alpha_1,\dots,\alpha_n$ be the roots of $P(G,x)=0$. From equation \eqref{e6},
$$L_G^{(k)}(x)=(k-1)!\sum_{j=1}^n \Re\left(\frac{(-1)^{k-1}}{(x-\alpha_j)^k}\right), ~~k=1,2,\ldots.$$

If $\Delta=0$, then $G$ is the empty graph on $n$ vertices, $P(G,x)=x^n$. Hence $$L_G^{(k)}(x)=-(k-1)!n|x|^{-k}<0$$ holds for all $x<0$. So we may assume $\Delta\geq 1$.
Since $|\alpha_j|\le \rho(G)$ for every $j$, Lemma \ref{mlemma} shows that each summand in Equation \eqref{e6} is nonpositive for
$x\le -\rho(G)\csc\left(\frac{\pi}{2k}\right)$. Moreover, $0$ is a root of $P(G,x)$. There exists a term $\Re((-1)^{k-1}/x^k)$ in the sum
which is strictly negative by Lemma \ref{mlemma}. Hence $L_G^{(k)}(x)<0$ holds for all $x\le -\rho(G)\csc\left(\frac{\pi}{2k}\right)$.
Furthermore, Bencs and Regts \cite{BR26} proved that $\rho(G)\leq 4.25\Delta$, so the theorem follows.
\end{proof}

By estimating  $\csc(\frac{\pi}{2k})$ from above, we derive Theorem \ref{Thm:Main2}.

\begin{proof}[\bf{Proof of Theorem \ref{Thm:Main2}}] Consider $f(t) := \frac{1}{t} \csc\left(\frac{\pi}{2t}\right), t \geq 2$. With $u = \pi/(2t)$, one can compute that $$f'(t) = \frac{\csc u}{t^2}(u \cot u - 1) < 0,$$ since $u \cot u < 1$ for $u \in (0, \pi/4]$. Thus $f(t)$ is decreasing on $[2, \infty)$.
Furthermore,
\[\csc\left(\frac{\pi}{2k}\right) = kf(k) \leq kf(2) = \frac{k}{\sqrt{2}}<0.708k  \quad (k \geq 2).\]
Then, we have $-4.25\Delta\csc(\frac{\pi}{2k})>-4.25\times 0.708k \Delta> -3.01\Delta k$.
Combining this with Theorem \ref{t1}, the theorem follows.
\end{proof}

\begin{remark}Any improvement on $\rho(G)$ immediately improves the conclusion of Theorems \ref{t1} and \ref{Thm:Main2}. For example,
Bencs and Regts \cite{BR26} proved that if $G$ is claw-free, then $\rho(G) \le 3.81\Delta$. Moreover, if $\Delta \ge 3$  and the girth of $G$ is sufficiently large, then
$\rho(G) \le 3.60\Delta$. Therefore, from Theorems \ref{t1} and \ref{Thm:Main2}, we have
\begin{enumerate}
    \item[(i)] If $G$ is claw-free, then
    \[
    L_G^{(k)}(x) < 0 \quad \text{for} \quad x \le -3.81\Delta\csc\left(\frac{\pi}{2k}\right),
    \]
    and in particular, since $3.81\times 0.708 <2.70$, the same conclusion holds for all
    $x \le -2.70\Delta k.$

    \item[(ii)] If $\Delta \ge 3$ and the girth of $G$ is sufficiently large, then
    \[
    L_G^{(k)}(x) < 0 \quad \text{for} \quad x \le -3.60\Delta\csc\left(\frac{\pi}{2k}\right),
    \]
    and in particular, since $3.60\times 0.708 <2.55$, the same conclusion holds for all
    $x \le -2.55\Delta k.$
\end{enumerate}

\end{remark}

\section*{Acknowledgement}
The authors are supported by National Natural Science Foundation of China (No. 12371350).

\end{document}